\newtheorem{thm}{Theorem}[section]
\newtheorem{cor}[thm]{Corollary}
\newtheorem{prop}[thm]{Proposition}
\newtheorem{lem}[thm]{Lemma}
\theoremstyle{definition}
\newtheorem{ex}[thm]{Example}
\theoremstyle{remark}
\newcommand{\bF}{\mathbb{F}}
\newcommand{\bP}{\mathbb{P}}
\newcommand{\bQ}{\mathbb{Q}}
\newcommand{\cL}{\mathcal{L}}
\newcommand{\cM}{\mathcal{M}}
\newcommand{\cO}{\mathcal{O}}
\newcommand{\cU}{\mathcal{U}}
\newcommand{\cV}{\mathcal{V}}
\newcommand{\sX}{\mathscr{X}}
\newcommand{\sY}{\mathscr{Y}}
\newcommand{\ol}{\overline}
\newcommand{\ul}{\underline}
\newcommand{\wt}{\widetilde}
\newcommand{\cl}{\mathrm{cl}}
\newcommand{\Frob}{\mathrm{Frob}}
\newcommand{\M}{\mathrm{M}}
\newcommand{\id}{\mathrm{id}}
\newcommand{\pr}{\mathrm{pr}}
\DeclareMathOperator{\Spec}{Spec}
\DeclareMathOperator{\Frac}{Frac}
\DeclareMathOperator{\Hom}{Hom}
\DeclareMathOperator{\Stab}{Stab}
\DeclareMathOperator{\Res}{Res}
\DeclareMathOperator{\Ch}{Ch}
\DeclareMathOperator{\SL}{SL}
\DeclareMathOperator{\Spin}{Spin}
\DeclareMathOperator{\SO}{SO}
\DeclareMathOperator{\Cl}{Cl}
\DeclareMathOperator{\Gr}{Gr}
\DeclareMathOperator{\OGr}{OGr}
\let\c@equation\c@thm
\numberwithin{equation}{section}
\title{Degrees of some orthogonal Deligne--Lusztig varieties}
\author{Yuta Nakayama}
\date{}
\begin{document}

\begin{abstract}
We show a degree formula for a type of orthogonal Deligne--Lusztig varieties and their Pl\"{u}cker embeddings.
This is an analog of work of Li on a unitary case.
\end{abstract}

\maketitle
\footnotetext{2020 \textit{Mathematics Subject Classification}.
 Primary: 11G25; Secondary: 11G18, 14G15}
\setcounter{tocdepth}{1}
\tableofcontents

\section{Introduction}

Deligne--Lusztig varieties are arithmetically important subvarieties of flag varieties over some finite fields.
They originated in \cite{DL} in the context of the geometric construction of representations of finite groups of Lie type.
They also appear in the special fibers of Shimura varieties.

In \cite{LiDeg}, Li proved a degree formula for a kind of unitary Deligne--Lusztig varieties with respect to their Pl\"{u}cker embeddings, where the degree of a projective variety \(X\subseteq \bP^N\) of pure dimension is the geometric intersection number of \(X\) with \(\dim X\) general hyperplanes.
Important in his proof are a case of the Tate conjecture and the identification of special cycles on the Deligne--Lusztig varieties with similar varieties of lower dimension.
The formula is related to \cite[Lemmas 6.4.5, 6.4.6]{LZKR} in the paper toward the Kudla--Rapoport conjecture as a finite field analog of the constant term formula in Kudla's geometric Siegel--Weil formula for unitary Shimura varieties.
Indeed, the Deligne--Lusztig varieties arises in \cite{VWSS} in investigating the supersingular locus of those Shimura varieties.  

In this article, we show a formula similar to that of Li for Deligne--Lusztig varieties of type \({}^2\! D_n\) in \cite{HPGU22}.
The result, especially Proposition \ref{analog} in the course of its proof, would be related to \cite[Lemmas 7.6.5, 7.6.6]{LZSW} in the paper toward the orthogonal variant of the conjecture as before.
In the same paper, the Deligne--Lusztig varieties are used when the authors study the supersingular locus of GSpin Shimura varieties.

We now state our result.
Let \(p\) be an odd prime.
Let \(d\) be a positive integer.
Set \(V\) to be a \(2(d+1)\)-dimensional quadratic space over \(\bF_p\)-vector space.
Assume that its quadratic form is nondegenerate and nonsplit.
Let \(\OGr(d,d+1)\) be the moduli over \(\bF_{p^2}\) of two isotropic subspaces \(L_d\subset L_{d+1}\subset V\otimes \bF_{p^2}\) of dimension \(d\) and \(d+1\).
It has two connected components \(\OGr^\pm(d,d+1)\).
Let \(Y_V^{\pm}\) be the locus inside the components on which
\[
    \dim (L_{d+1}+\Frob^*L_{d+1}) = d+2,
\]
where \(\Frob^*\) is the pull-back by the \(p\)-power Frobenius.
\begin{thm}[Theorem \ref{main}]
The degree of \(Y_V^\pm\) in terms of its Pl\"{u}cker embedding through \(L_{d+1}\) is
\[
    2^d\prod_{i=1}^d \frac{-p^i+1}{-p+1}.
\]
\end{thm}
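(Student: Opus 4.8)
\emph{The strategy.} The plan is to prove the formula by induction on $d$, transcribing to the quadratic setting the method Li used in the unitary case \cite{LiDeg}. Write $Y:=Y_V^\pm$, a smooth projective variety of dimension $d$ (the flag $L_d$ being determined on the Deligne--Lusztig locus, e.g. as $L_{d+1}\cap\Frob^*L_{d+1}$, so that the map through $L_{d+1}$ really is an embedding), and let $\cL$ be the very ample line bundle cutting it out, namely the restriction of $\cO(1)$ under $L_{d+1}\mapsto\bigwedge^{d+1}L_{d+1}$; the quantity sought is then the intersection number $\int_Y c_1(\cL)^d$. For $d=1$ the condition $\dim(L_{d+1}+\Frob^*L_{d+1})=d+2$ is automatic, $Y$ is a connected component of the orthogonal Grassmannian of a split four-dimensional space — a smooth conic — and $\cL$ restricts to $\cO(2)$, so $\deg Y=2$. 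This matches $2^1\prod_{i=1}^{1}\frac{-p^i+1}{-p+1}$ and furnishes the base case.

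\emph{Special cycles and the hyperplane class.} For the inductive step I would introduce on $Y$ the special divisors $\cZ(u)$ attached to the relevant $\bF_p$-rational data $u$ in $V$ (the isotropic lines, in the manner of the Kudla--Rapoport special divisors appearing in \cite{LZSW}), where $\cZ(u)$ is the effective divisor cut out by the incidence condition determined by $u$. Since the finite group $\mathrm{O}(V)(\bF_p)$ acts on $Y$ and permutes the $\cZ(u)$ transitively within each type, the number $\deg\cZ(u):=\int_{\cZ(u)}c_1(\cL|_{\cZ(u)})^{d-1}$ does not depend on $u$. The key cohomological input — an instance of the Tate conjecture, exactly as in \cite{LiDeg} — is that $c_1(\cL)\in H^2(Y)$ lies in the span of the cycle classes $[\cZ(u)]$; this is available because the $\ell$-adic cohomology of these Deligne--Lusztig varieties is of Tate type (via Lusztig's computations, or via an explicit paving). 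Writing $c_1(\cL)=\sum_u a_u[\cZ(u)]$ with $a_u\in\bQ$ and pairing with $c_1(\cL)^{d-1}$ gives
\[
\deg Y \;=\; \int_Y c_1(\cL)^{d-1}\cdot\Big(\sum_u a_u\,[\cZ(u)]\Big) \;=\; \Big(\sum_u a_u\Big)\cdot\deg\cZ(u).
\]
A count of the isotropic lines involved, together with the determination of the coefficients $a_u$ — which reduces to counting $\bF_p$-points of a smaller orthogonal Grassmannian — should yield $\sum_u a_u = 2\cdot\dfrac{p^d-1}{p-1}$.

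\emph{Reduction to lower dimension and conclusion.} By Proposition \ref{analog}, each $\cZ(u)$ is isomorphic to a Deligne--Lusztig variety of the same kind for the quadratic space $V':=u^\perp/u$, which is again nondegenerate and nonsplit over $\bF_p$ but of dimension $2d=2((d-1)+1)$; under this isomorphism $\cL|_{\cZ(u)}$ corresponds to the Plücker bundle of $Y_{V'}^{\pm}$, so $\deg\cZ(u)=\deg Y_{V'}^{\pm}$ (both components of the latter having equal degree). Substituting into the displayed identity produces the recursion
\[
\deg Y_V^\pm \;=\; 2\cdot\frac{p^d-1}{p-1}\cdot\deg Y_{V'}^{\pm},
\]
and by the induction hypothesis $\deg Y_{V'}^{\pm}=2^{d-1}\prod_{i=1}^{d-1}\frac{-p^i+1}{-p+1}$. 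Multiplying and reindexing yields $2^d\prod_{i=1}^{d}\frac{-p^i+1}{-p+1}$, as asserted.

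\emph{The main obstacle.} I expect Proposition \ref{analog} to be the crux: one must show that the Frobenius-incidence locus carved out by $u$ is, intrinsically, a Deligne--Lusztig variety of the same type for $u^\perp/u$, and that the two Plücker polarizations are compatible under the identification. Close behind is the need to make the cohomological relation fully explicit — to verify the required case of the Tate conjecture for $Y$ and to pin down the coefficients $a_u$ sharply enough to extract the factor $2\,\frac{p^d-1}{p-1}$ — which calls for a firm grip on the Picard group of $Y$ and on the cycle class map in degree two.
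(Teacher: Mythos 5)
Your overall strategy coincides with the paper's: express the hyperplane class as a rational combination of the codimension-one special cycles $[Y_{W^\perp/W}]$ (your $[\cZ(u)]$) using a Tate-conjecture input, push forward by the projection formula to identify each $\int_Y c_1(\cL)^{d-1}[\cZ(u)]$ with the Plücker degree of the lower-dimensional variety $Y_{u^\perp/u}$, and induct; your coefficient sum $\sum_u a_u = 2(p^d-1)/(p-1)$ and the base case $\deg=2$ for $d=1$ both agree with what the paper obtains ($a_u = 2/(p^{d+1}+1)$ for the Plücker bundle times the count $(p^{d+1}+1)(p^d-1)/(p-1)$ of isotropic lines). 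The bookkeeping in your recursion is correct.

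The genuine gap is exactly where you say "should yield": the determination of the coefficient $a_u$ is the mathematical core of the proof and does \emph{not} reduce to a point count of a smaller orthogonal Grassmannian. Two things are missing. First, "the cohomology is of Tate type" does not by itself put $c_1(\cL)$ in the span of the $[\cZ(u)]$, nor does it let you solve for $a_u$; the paper instead proves (Proposition \ref{Poincare}, via \cite[Theorem 6.3.2]{LZSW}) that Poincar\'e duality restricts to a \emph{perfect} pairing between the span of the codimension-$1$ and the codimension-$(d-1)$ special-cycle classes, so that the coefficient is pinned down by computing $c_1(\cL)\cdot[Y_{W'^\perp/W'}]$ for all $(d-1)$-dimensional isotropic $W'$. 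Second, that computation (Proposition \ref{induct}) splits into three cases according to the position of the line $W$ relative to $W'$, and the case $W\subseteq W'$ is an improper intersection: one needs the excess intersection formula together with the identification $N_{j_{W,V}}\simeq \cL_{Y_{W^\perp/W}}^\vee$ (Corollary \ref{normal}), which in turn rests on the moduli-theoretic computation of the tangent bundle $T_{Y_V}\simeq \ul\Hom(\cL_{Y_V}, V\otimes\cO/(\cU_{d+1}+\Frob^*\cU_{d+1}))$ (Proposition \ref{tang}) and on the explicit $d=1$ computation $\deg\cL_{Y_V}=p-1$ of Example \ref{1D}. None of this apparatus appears in your sketch, and without it the value of $\sum_u a_u$ is an unproven assertion. (A small mislabel: the identification $\cZ(u)\simeq Y_{u^\perp/u}$ is not Proposition \ref{analog} — that is the statement $c_1(\cL_{Y_V}^\vee)^d=\prod_{i=1}^d(-p^i+1)$ — but the unnumbered proposition on irreducible components of $Y_{V,r}\setminus Y_{V,r-1}$; also note the paper runs the induction for the bundle $\cL_{Y_V}=\cU_{d+1}/\cU_d$ and only at the end converts to the Plücker bundle via $c_1(\det\cU_{d+1}^\vee)=\tfrac{2}{-p+1}\,c_1(\cL_{Y_V}^\vee)$, whereas you work with the Plücker bundle throughout, which is harmless.)
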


Our proof is direct like \cite{LiDeg} in that it does not require arguments over local fields in \cite{LZSW}.
We rely on a case of the Tate conjecture and the inductive structure of the special cycles and Deligne--Lusztig varieties that we consider.
Toward the proof, we also develop a moduli description of the Deligne--Lusztig varieties that the author did not find in the literature.
The depiction is necessary for calculating the normal bundle of special cycles.
In turn, we use the bundle in the excess intersection formula.

Finally, we present the organization of this paper.
In \S \ref{sec:Vars}, we describe geometric objects in play.
These include the Deligne--Lusztig varieties, their special cycles, and the line bundles on the varieties such as tangent and normal bundles.
The aforementioned moduli description of \(Y_V^+\amalg Y_V^-\) is in this section.
In \S \ref{sec:Chern}, we perform calculations relevant to Chern classes.
Our main result is here as well.

\subsection*{Acknowledgement}
The author thanks his adviser N.~Imai for academically supporting him throughout this project.
He also thanks C.~Li for a conversation regarding this work.

\subsection{Notations}
For a finite set \(T\), we write \(\sharp T\) for the number of its elements.
The notation \(\M_n(A)\) for a ring \(A\) and a positive integer \(n\) is to be understood as the algebra of \(n\times n\) matrices over \(A\).
If \(W\) is a subspace of some ambient quadratic space, then the orthogonal complement of \(W\) is denoted by \(W^\bot\).
If \(\cV\) is a vector bundle over some scheme, then \(V^\vee\) denotes its dual bundle.
Also, \(\Gr(r, \cV)\) denotes the Grassmanninan parametrizing locally direct summands of \(\cV\) of rank \(r\).
Moduli functors are denoted by their representing objects when the latter exists.

If \(X\to S\) is a smooth morphism of schemes, then \(T_{X/S}\) means its tangent bundle.
Here and in other places we often drop ``\(\Spec\).''
For a regular closed immersion \(i\) of schemes, let \(N_{i}\) be its normal bundle.

The notations below are from \cite[\S 1.4]{LiDeg}.
Let \(X\) be a smooth projective variety over a finite field \(k\).
The notation \(\Ch^r(X_{\ol k})\) stands for the Chow group of codimension-\(r\) cycles on \(X_{\ol k}\).
If \(l\) is a prime nonzero in \(k\), then \(\cl_r\colon \Ch^r(X_{\ol k})_\bQ \to H^{2r}(X_{\ol k},\bQ_l)(r)\) denotes the \(l\)-adic cycle class map.
The class of a subvariety \(Z\subseteq X\) of codimension \(r\) in \(\Ch^r(X_{\ol k})\) is denoted by \([Z]_X\).
Sometimes we just write \([Z]\).
If \(\cV\) is a vector bundle on \(X\), then \(c_r(\cV)\in \Ch^r(X_{\ol k})\) is its \(r\)-th Chern class.
These can mean their images in \(\Ch^r(X_{\ol k})_\bQ\) as well.

\section{Varieties and vector bundles} \label{sec:Vars}

We borrow settings from \cite[\S 3.2]{HPGU22}.
Some notations are changed to those from \cite[\S 4.7]{LZSW}.
Let \(p>2\) be a prime number.
Take a nonnegative integer \(d\).
Set \(V\) to be an \(\bF_p\)-vector space of dimension \(2(d+1)\).
Take a nondegenerate nonsplit symmetric form \([*,*]\colon V\times V\to \bF_p\).
Fix a basis \(e_1,\dots, e_{d+1}, f_1,\dots, f_{d+1}\) of \(V\otimes \bF_{p^2}\) such that \([e_i,f_j] = \delta_{i,j}\) for \(1\leqq i,j\leqq d+1\) and Kronecker delta, that \(e_1,\dots,e_d,f_1,\dots,f_d\in V\), and that the \(p\)-power Frobenius switches \(e_{d+1}\) and \(f_{d+1}\).

\subsection{Deligne--Lusztig varieties}
For \(0\leqq r \leqq d+1\), let \(\OGr(r)\) be the functor from the category of \(\bF_{p^2}\)-schemes to that of sets carrying \(S\) to the set of totally isotropic elements of \(\Gr(r, V\otimes \bF_{p^2})(S)\).
The functor is represented by a smooth \(\bF_{p^2}\)-scheme denoted by the same symbol as in the case of Grassmannians.
Similarly, let \(\OGr(d,d+1)\) be the smooth \(\bF_{p^2}\)-scheme that represents the functor with the same source and target carrying \(S\) to the set of \(\cL_d\subset \cL_{d+1}\) where \(\cL_r\in\OGr(r)(S)\) for \(d\leqq r\leqq d+1\).
By \cite[Lemma 3.4]{HPGU22}, the forgetful functor allows us to see \(\OGr(d,d+1)\) as a trivial double cover \(\OGr^+(d,d+1)\amalg\OGr^-(d,d+1)\) over \(\OGr(d)\).
We decide that \((\langle e_1,\dots,e_d\rangle, \langle e_1,\dots, e_{d+1}\rangle )\in \OGr^+(d,d+1)(\bF_{p^2})\).

Let \(\sY_V\) be the \(\bF_{p^2}\)-scheme representing the functor from the category of \(\bF_{p^2}\)-schemes to that of sets carrying \(S\) to the set of \((\cL_d, \cL_{d+1})\in\OGr(d,d+1)(S)\) such that \(\cL_d\subset \Frob^*\cL_{d+1}\) and that
\[
    \cL_{d+1}+\Frob^*\cL_{d+1}\subseteq V\otimes \cO_S
\]
is a locally direct summand, where \(\Frob\colon S\to S\) is the \(p\)-power Frobenius.
Note \(\cL_{d+1}\neq \Frob^*\cL_{d+1}\) at each point of \(S\) by the nonsplitness condition.

\begin{lem} \label{sm}
The \(\bF_{p^2}\)-scheme \(\sY_V\) is smooth.
\end{lem}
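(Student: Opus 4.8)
The plan is to prove that \(\sY_V\) is formally smooth over \(\bF_{p^2}\); being of finite type, it is then smooth. So fix a square-zero extension \(A\twoheadrightarrow A_0\) of \(\bF_{p^2}\)-algebras, with kernel \(I\) (so \(I^2=0\)), and a point \((\cL_{d,0},\cL_{d+1,0})\in\sY_V(A_0)\); one must lift it to \(\sY_V(A)\). The crucial input is a feature of characteristic \(p\): since \(I^2=0\) and \(p\geq 2\) we have \(I^p=0\), so the \(p\)-power Frobenius \(A\to A\) kills \(I\) and factors through \(A_0\). Hence the pull-back \(\Frob^*\cL_{d+1}\subseteq V\otimes A\) of any lift \(\cL_{d+1}\in\OGr(d+1)(A)\) of \(\cL_{d+1,0}\) depends only on the reduction \(\cL_{d+1,0}\) and not on the lift. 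Call it \(\cM\); it is an isotropic sub-bundle of \(V\otimes A\) of rank \(d+1\) reducing to \(\cM_0:=\Frob^*\cL_{d+1,0}\), and by assumption \(\cL_{d,0}\subseteq\cM_0\).

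Next I would build the lift using \(\cM\). Consider the scheme \(F_{\cM}\) over \(\Spec A\) parametrizing a rank-\(d\) sub-bundle of \(\cM\) together with a maximal isotropic sub-bundle of \(V\otimes A\) containing it; as \(\cM\) is locally free and isotropic, \(F_{\cM}\) is smooth over \(\Spec A\) (a projective-space bundle carrying an \'etale double cover). Since \(\cL_{d,0}\subseteq\cM_0\), the pair \((\cL_{d,0},\cL_{d+1,0})\) is an \(A_0\)-point of \(F_{\cM}\), which therefore lifts to an \(A\)-point \((\cL_d,\cL_{d+1})\). As \(\cL_{d+1}\) reduces to \(\cL_{d+1,0}\), the previous paragraph gives \(\Frob^*\cL_{d+1}=\cM\), so \(\cL_d\subseteq\cL_{d+1}\) and \(\cL_d\subseteq\Frob^*\cL_{d+1}\). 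Finally one checks that \(\cL_{d+1}+\Frob^*\cL_{d+1}=\cL_{d+1}+\cM\) is a locally direct summand: the map \((\cL_{d+1}/\cL_d)\oplus(\cM/\cL_d)\to(V\otimes A)/\cL_d\) is injective on each fiber — it is so modulo the nilpotent ideal \(I\), because there \(\cL_{d+1,0}\neq\cM_0\) and \(\cL_{d+1,0}+\cM_0\) is a sub-bundle, and reducing modulo \(I\) does not change fibers — so its cokernel is locally free, as required. Thus \((\cL_d,\cL_{d+1})\in\sY_V(A)\) lifts the given point, and \(\sY_V\) is smooth.

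The step that carries the content is the first paragraph — the insensitivity of \(\Frob^*\cL_{d+1}\) to square-zero deformations of \(\cL_{d+1}\); it is exactly this that linearizes the incidence condition \(\cL_d\subseteq\Frob^*\cL_{d+1}\) and leaves the deformation problem unobstructed. The rest is routine: manipulations with locally free sheaves over \(A\) and the openness of the non-degeneracy conditions, and it may equally be read off from the moduli-theoretic description of \(\OGr(d,d+1)\) and its subvarieties in \cite{HPGU22}. Alternatively one can note that \(\sY_V\otimes_{\bF_{p^2}}\ol{\bF}_p\) is a Deligne--Lusztig variety and invoke the smoothness of such varieties \cite{DL}; the argument above is just the transcription of that fact into the moduli language we use.
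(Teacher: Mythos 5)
Your proof is correct and follows essentially the same route as the paper's: both hinge on the observation that the \(p\)-power Frobenius kills the square-zero ideal, so \(\Frob^*\cL_{d+1}\) is determined by the reduction alone, which reduces the lifting problem to one for a smooth incidence scheme. The remaining differences are cosmetic --- you lift the pair \((\cL_d,\cL_{d+1})\) in one step via the smooth scheme \(F_{\cM}\) and verify the direct-summand condition by fiberwise injectivity, whereas the paper lifts in two stages (using \cite[Lemma 3.4]{HPGU22}) and checks that condition with a retract and Nakayama's lemma.
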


\begin{proof}
As below, \(\sY_V\) is formally smooth.
Let \(S_0\to S\) be a closed immersion of affine \(\bF_{p^2}\)-schemes whose ideal sheaf vanishes when squared.
Take \((\cL_{0,d}, \cL_{0,d+1})\in \sY_V(S_0)\).
Since \(\OGr(d+1)\) is smooth, some \(\cL_{d+1}\in \OGr(d+1)(S)\) lifts \(\cL_{0,d+1}\).
Note that \(\Frob^*\cL_{d+1}\) only depends on \(\cL_{0, d+1}\) because the \(p\)-th power of the ideal sheaf of the closed immersion \(S_0\to S\) is zero.
Since \(\Gr(d, \Frob^*\cL_{d+1})\) is smooth, some \(\cL'_d\in\Gr(d, \Frob^*\cL_{d+1})(S)\) lifts \(\cL_{0,d}\).
This \(\cL'_d\) is totally isotropic.
By \cite[Lemma 3.4]{HPGU22}, there is a unique \(\cL'_{d+1}\in\OGr(d+1)(S)\) such that \((\cL'_d, \cL'_{d+1})\in \OGr(d,d+1)(S)\) extends \((\cL_{0,d}, \cL_{0,d+1})\).

It is enough to prove that \(\cL'_{d+1}+\Frob^*\cL'_{d+1}\) is a locally direct summand of \(V\otimes \cO_S\).
By shrinking \(S\), we may assume that there is a retract \(V\otimes \cO_{S_0}\to \cL_{0,d+1}+\Frob^*\cL_{0,d+1}\).
Lift this to \(V\otimes \cO_{S}\to \cL'_{d+1}+\Frob^*\cL'_{d+1}\).
Its composition
\[
\cL'_{d+1}+\Frob^*\cL'_{d+1}\to V\otimes \cO_S\to\cL'_{d+1}+\Frob^*\cL'_{d+1}
\]
with the inclusion is surjective by the lemma of Nakayama.
We move on to the injectivity.
The orthogonal complement of \(\cL_{0,d}\) is \(\cL_{0,d+1}+\Frob^*\cL_{0,d+1}\) since both are locally direct summands of \(V\otimes \cO_S\) of the same rank.
The same is true for \(\cL'_d\) and \(\cL'_{d+1}\) by the lemma of Nakayama.
In particular, \(\cL'_{d+1}+\Frob^*\cL'_{d+1}\) is a vector bundle.
The required injectivity follows by applying the same lemma to the kernel.
\end{proof}

\begin{lem} \label{cl}
The forgetful map \(\sY_V\to \OGr(d+1)\) is a closed immersion.
\end{lem}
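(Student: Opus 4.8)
The plan is to realise $\sY_V$ as a \emph{closed} subscheme of the projective $\bF_{p^2}$-scheme $\OGr(d,d+1)$ and then to show that the forgetful map to $\OGr(d+1)$ is a monomorphism; since a proper monomorphism of (Noetherian) schemes is a closed immersion, this finishes the proof.

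First I would prove that, as a subfunctor of $\OGr(d,d+1)$, the scheme $\sY_V$ coincides with the closed locus $W$ on which $\cL_d\subseteq\Frob^*\cL_{d+1}$ --- in other words, that on $W$ the remaining open condition ``$\cL_{d+1}+\Frob^*\cL_{d+1}$ is a locally direct summand'' is automatic. That $W$ is closed is clear: it is the vanishing locus of the composite $\cL_d\hookrightarrow V\otimes\cO_S\twoheadrightarrow(V\otimes\cO_S)/\Frob^*\cL_{d+1}$, a section of a vector bundle (note $\Frob^*\cL_{d+1}$ is a rank-$(d+1)$ locally direct summand). For the automaticity, over an $S$-point of $W$ both $\cL_{d+1}$ and $\Frob^*\cL_{d+1}$ are isotropic rank-$(d+1)$ summands containing $\cL_d$, hence both are contained in the rank-$(d+2)$ summand $\cL_d^\bot$. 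Passing to the rank-$2$ nondegenerate quadratic bundle $\cL_d^\bot/\cL_d$, the images of $\cL_{d+1}$ and $\Frob^*\cL_{d+1}$ are isotropic line subbundles that are distinct at every point of $S$ (otherwise $\cL_{d+1}$ and $\Frob^*\cL_{d+1}$ would coincide there, contradicting the nonsplitness remark following the definition of $\sY_V$). Two everywhere-distinct line subbundles of a rank-$2$ bundle span it (the induced map from their direct sum is surjective by Nakayama, hence an isomorphism by a rank count), so $\cL_{d+1}+\Frob^*\cL_{d+1}=\cL_d^\bot$, which is a locally direct summand. Thus $\sY_V=W$ is closed in $\OGr(d,d+1)$, hence proper over $\bF_{p^2}$; since $\OGr(d+1)$ is separated over $\bF_{p^2}$, the forgetful map $\sY_V\to\OGr(d+1)$ is proper.

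Next I would check that $\sY_V\to\OGr(d+1)$ is a monomorphism by showing that an $S$-point of $\sY_V$ lying over a given $\cL_{d+1}$ is unique. On $\sY_V$ the sum $\cL_{d+1}+\Frob^*\cL_{d+1}$ is a locally direct summand of rank exactly $d+2$: it has rank $\geq d+2$ everywhere because $\cL_{d+1}\neq\Frob^*\cL_{d+1}$ pointwise, and rank $\leq d+2$ everywhere because $\cL_d$ has rank $d$ and sits inside the intersection at each point. Hence $\cL_{d+1}\cap\Frob^*\cL_{d+1}$, the kernel of the surjection $\cL_{d+1}\oplus\Frob^*\cL_{d+1}\to\cL_{d+1}+\Frob^*\cL_{d+1}$ of vector bundles, is a locally direct summand of $V\otimes\cO_S$ of rank $d$. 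Since $\cL_d\subseteq\cL_{d+1}\cap\Frob^*\cL_{d+1}$ is an inclusion of rank-$d$ locally direct summands of $V\otimes\cO_S$, it is an equality, so $\cL_d=\cL_{d+1}\cap\Frob^*\cL_{d+1}$ is recovered from $\cL_{d+1}$. Being a proper monomorphism, $\sY_V\to\OGr(d+1)$ is finite, and a finite monomorphism is a closed immersion.

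The main obstacle I anticipate is the first step: one must argue over a general base that the ``locally direct summand'' condition is redundant on $W$, which is precisely where nonsplitness is used and where one works with rank-$2$ quadratic bundles rather than with quadratic spaces over a field. The rank bookkeeping in the monomorphism step is routine, but the equalities of subsheaves must be verified fibrewise first, using that every subsheaf in sight is a locally direct summand so that the relevant intersections commute with passing to residue fields. Everything else is formal.
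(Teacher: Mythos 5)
Your proposal is correct, and it shares the paper's overall skeleton (monomorphism plus properness implies closed immersion, with the monomorphism coming from recovering $\cL_d$ as $\cL_{d+1}\cap\Frob^*\cL_{d+1}$ by a rank count among locally direct summands). Where you genuinely diverge is the properness step. The paper verifies the valuative criterion for $\sY_V\to\OGr(d+1)$ directly: given a point over $\Frac R$, it extends $L_d$ by taking $L_{0,d}\cap(V\otimes R)$ and then checks the two defining conditions over $R$. You instead show that $\sY_V$ is the \emph{closed} subscheme of the projective scheme $\OGr(d,d+1)$ cut out by $\cL_d\subseteq\Frob^*\cL_{d+1}$, because on that locus the a priori open condition that $\cL_{d+1}+\Frob^*\cL_{d+1}$ be a locally direct summand is automatic: both Lagrangians sit inside the rank-$(d+2)$ bundle $\cL_d^\bot$, and their images in the rank-$2$ quadratic bundle $\cL_d^\bot/\cL_d$ are everywhere-distinct isotropic line subbundles (distinctness at field-valued points being exactly the nonsplitness argument the paper invokes after defining $\sY_V$; note its proof does not use the direct-summand condition, so applying it on your locus $W$ rather than on $\sY_V$ is legitimate), whence $\cL_{d+1}+\Frob^*\cL_{d+1}=\cL_d^\bot$. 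Your route buys a slightly stronger structural statement -- that the second condition in the definition of $\sY_V$ is redundant, and that $\sY_V$ is closed in $\OGr(d,d+1)$ -- and it is close in spirit to the computation $\cL'^\bot_d=\cL'_{d+1}+\Frob^*\cL'_{d+1}$ that the paper carries out inside the proof of Lemma \ref{sm}; the paper's valuative-criterion argument is more hands-on but avoids having to discuss the rank-$2$ quotient bundle. Both are complete proofs.
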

\begin{proof}
First we show it is mono.
For an \(\bF_{p^2}\)-scheme \(S\) and \((\cL_d,\cL_{d+1})\in \sY_V(S)\), we have \(\cL_d = \cL_{d+1}\cap \Frob^*\cL_{d+1}\) because their orthogonal complements are locally direct summands of \(V\otimes\cO_S\) of the same rank.
Now we turn to the properness of \(\sY_V\to \OGr(d+1)\).
We argue by the valuative criterion.
Let \(R\) be a discrete valuation ring over \(\bF_{p^2}\).
Take \(L_{d+1}\in \OGr(d+1)(R)\) and \((L_{0,d}, L_{d+1}\otimes_R \Frac R)\in \sY_V(\Frac R)\).
Put \(L_d\colonequals L_{0,d}\cap V\otimes R\subset V\otimes\Frac R\).
This is a free isotropic \(R\)-module of rank \(d\).
We check \((L_d,L_{d+1})\in \sY_V(R)\).
The condition \(L_d\subset L_{d+1}\otimes_{R,\Frob}R\) is clear from the one for \((L_{0,d}, L_{d+1}\otimes_R \Frac R)\).
The remaining condition is also straightforward, e.g., as below.
For a splitting \((V\otimes R)/(L_{d+1}\otimes_{R,\Frob} R)\xrightarrow\sim M\subset V\otimes R\), we have
\(
    V\otimes R/(L_{d+1}+L_{d+1}\otimes_{R,\Frob}R)\simeq M/(M\cap L_{d+1}).
\)
The right side is torsion-free since \(M\cap L_{d+1} = (M\cap L_{d+1})\otimes_R \Frac R\cap V\otimes R\subset V\otimes \Frac R\).
\end{proof}

Our \(\sY_V\) coincides with \(\sX\) in \cite[\S 3.2]{HPGU22} by Lemmas \ref{sm} and \ref{cl}.

Let \(Y_V^\pm\subset \sY_V\) be the pull-back of \(\OGr^\pm(d,d+1)\) by the forgetful morphism.
These are indeed projective geomtrically irreducible Deligne--Lusztig varieties by \cite[Proposition 3.6]{HPGU22}.
We fix one of the \(+\) and \(-\) irrespective of \(V\) and denote the corresponding pull-back \(Y_V\).

\subsection{Special cycles}

For \(0\leqq r\leqq d\), let \(Y_{V,r}\) be the reduced closed subscheme of \(Y_V\) such that
\[
    Y_{V,r}(k) = \{(L_d,L_{d+1})\in Y_V(k)\mid L_{d+1}^{(r+1)} = L_{d+1}^{(r+2)}\}
\]
for a field \(k\supseteq \bF_{p^2}\), where \(L_{d+1}^{(i)}\colonequals L_{d+1}\cap \Frob^*L_{d+1}\cap \dots\cap(\Frob^{i})^*L_{d+1}\) for a nonnegative integer \(i\).
We also put \(Y_{V,-1}\colonequals\emptyset\).
A description of \(Y_{V,r}\) via Deligne--Lusztig subvarieties of \(Y_V\) is in \cite[Proposition 3.8]{HPGU22}.
If \(W\subset V\) is an isotropic subspace, then we have a morphism \(j_{W,V}\colon Y_{W^\bot/W}\to Y_V\) that carries an element of \(Y_{W^\bot/W}(S)\) for an \(\bF_{p^2}\)-scheme \(S\) to its inverse image by the map \(W^\bot\otimes \cO_S\to (W^\bot /W)\otimes \cO_S\).

In fact, the following holds similarly to \cite[Proposition 5.7]{RTWSSUnitRam}.

\begin{prop}
For \(0\leqq r\leqq d\), there is a unique bijection between the set of irreducible components of \(Y_{V,r}\backslash Y_{V,r-1}\) and the \((d-r)\)-dimensional isotropic subspaces of \(V\), such that the closure in \(Y_V\) of the component corresponding to \(W\subset V\) is isomorphic to \(Y_{W^\bot/W}\) through the above morphism.
\end{prop}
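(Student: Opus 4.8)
The plan is to analyze the stratification $Y_{V,r} \setminus Y_{V,r-1}$ pointwise, exhibit a natural map to the set of $(d-r)$-dimensional isotropic subspaces of $V$, and show each fiber is irreducible and of the expected form. First I would unwind the definitions: for a geometric point $(L_d, L_{d+1}) \in Y_{V,r}(k) \setminus Y_{V,r-1}(k)$, the chain $L_{d+1} = L_{d+1}^{(0)} \supseteq L_{d+1}^{(1)} \supseteq \cdots$ drops in dimension by exactly one at each step (this is where the nonsplitness and the locally-direct-summand condition enter, as in Lemma \ref{cl} and \cite[\S 3.2]{HPGU22}), so $L_{d+1}^{(i)}$ has dimension $d+1-i$ for $0 \le i \le r+1$, and it stabilizes at $L_{d+1}^{(r+1)} = L_{d+1}^{(r+2)}$. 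The key observation is that $W \colonequals L_{d+1}^{(r+1)}$ is then $\Frob^*$-stable, hence defined over $\bF_p$, i.e.\ $W \subset V$, and it is isotropic of dimension $d-r$. This assignment $(L_d, L_{d+1}) \mapsto W$ is the sought bijection on components; it is clearly $\Frob$-equivariant and constructible, so it is induced by an actual morphism on each stratum.

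Next I would identify the fiber. Given an isotropic $W \subset V$ of dimension $d-r$, a point of $Y_{V,r} \setminus Y_{V,r-1}$ mapping to $W$ satisfies $W \subseteq L_d \subseteq L_{d+1} \subseteq W^\bot$ (the last inclusion because $L_{d+1}$ is isotropic and contains $W$, so it lies in $W^\bot \otimes \cO_S$), and conversely the data $(L_d/W \subset L_{d+1}/W)$ inside $(W^\bot/W) \otimes \bF_{p^2}$ is visibly an element of $Y_{W^\bot/W}$: the Frobenius-orbit conditions descend because $W$ is $\Frob$-stable, and one checks the dimension drop $\dim(L_{d+1}^{(r+1)}/W) = 0$ translates exactly to the ``generic'' non-special condition defining the open stratum (equivalently, that the image in $Y_{W^\bot/W}$ does not lie in the corresponding $Y_{W^\bot/W, r'}$ for smaller $r'$). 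This is precisely the morphism $j_{W,V}\colon Y_{W^\bot/W} \to Y_V$ defined in the excerpt. One then verifies $j_{W,V}$ is a locally closed immersion onto the $W$-component of $Y_{V,r} \setminus Y_{V,r-1}$, and that distinct $W$ give disjoint images (if two such $W$, $W'$ arose from the same point, then $W = L_{d+1}^{(r+1)} = W'$). Taking closures, $\ol{j_{W,V}(Y_{W^\bot/W})} \cong Y_{W^\bot/W}$ since $j_{W,V}$ is a closed immersion on the nose by the analog of Lemma \ref{cl} — $Y_{W^\bot/W}$ is already projective by \cite[Proposition 3.6]{HPGU22}, so the locally closed immersion is in fact closed.

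For the irreducibility of each stratum-component I would invoke that $Y_{W^\bot/W}$ is geometrically irreducible (\cite[Proposition 3.6]{HPGU22} applied to the quadratic space $W^\bot/W$, which is again nondegenerate and nonsplit of even dimension $2(r+1)$, the nonsplitness being inherited since $W$ is defined over $\bF_p$), and that the non-special open stratum $Y_{W^\bot/W} \setminus Y_{W^\bot/W, r-1}$ is a dense open subscheme — this last point following from the dimension count in \cite[Proposition 3.8]{HPGU22}, where the special cycles $Y_{W^\bot/W, r'}$ have strictly smaller dimension. Uniqueness of the bijection is automatic once each component determines its $W$ as the common stabilized intersection $L_{d+1}^{(r+1)}$, which is a scheme-theoretic invariant locally constant on the stratum.

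The main obstacle I anticipate is the careful bookkeeping that the dimension of $L_{d+1}^{(i)}$ drops by exactly one at each step until stabilization, and that this stabilized value is exactly $d-r$ on $Y_{V,r} \setminus Y_{V,r-1}$ rather than merely $\le d-r$; this requires combining the defining condition $L_d \subset \Frob^* L_{d+1}$ with the nonsplitness (which forces $L_{d+1} \ne \Frob^* L_{d+1}$ everywhere, hence a genuine drop at the first step) and then propagating the estimate — essentially the linear algebra underlying the Deligne--Lusztig stratification in \cite[\S 3]{HPGU22}. A secondary subtlety is checking that $j_{W,V}$ lands in the correct connected component $Y_V$ (rather than the opposite one): since the sign of a point of $\OGr(d,d+1)$ is locally constant and $j_{W,V}$ is defined on a connected space $Y_{W^\bot/W}$, it suffices to track the sign at one boundary point, e.g.\ the standard isotropic flag, which is a direct computation with the chosen basis $e_1, \dots, e_{d+1}, f_1, \dots, f_{d+1}$.
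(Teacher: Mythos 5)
Your argument is correct and is exactly the standard stratification analysis (drop-by-one of the chain \(L_{d+1}^{(i)}\), Frobenius-stability of the stabilized subspace \(W=L_{d+1}^{(r+1)}\), identification of the fiber over \(W\) with the dense open stratum of \(j_{W,V}(Y_{W^\bot/W})\)) that the paper itself does not write out but delegates to the analogous \cite[Proposition 5.7]{RTWSSUnitRam}. The only point to keep an eye on is the sign/component bookkeeping you already flag, which is a matter of the paper's conventions rather than a gap in the argument.
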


This leads to the following collorary of \cite[Theorem 6.3.2]{LZSW}.
For a prime \(l\neq p\) and \(0\leqq r\leqq d\), we write \(T_l^{2r}(V)\) for the subspace of \(H^{2r}(Y_{V,\ol{\bF_p}}, \bQ_l)(r)\) generated by \(\cl_r(Y_{W^\bot/W})\) for \((d-r)\)-dimensional isotropic subspaces \(W\subset V\).
\begin{prop} \label{Poincare}
The Poincar\'e duality induces a perfect pairing
\[
    T_l^{2r}(V)\times T_l^{2(d-r)}(V)\to \bQ_l.
\]
\end{prop}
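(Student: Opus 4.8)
The plan is to deduce this from the cited result \cite[Theorem 6.3.2]{LZSW}, which—being a case of the Tate conjecture—should identify each Tate cycle space $T_l^{2r}(V)$ with the full space of Tate classes in $H^{2r}(Y_{V,\overline{\bF_p}},\bQ_l)(r)$. Granting that, the statement becomes a formal consequence of Poincar\'e duality on the smooth projective variety $Y_V$: Poincar\'e duality gives a perfect pairing $H^{2r}(Y_{V,\overline{\bF_p}},\bQ_l)(r)\times H^{2(d-r)}(Y_{V,\overline{\bF_p}},\bQ_l)(d-r)\to\bQ_l$ (note $\dim Y_V=d$), and this pairing is $\Gal(\overline{\bF_p}/\bF_{p^2})$-equivariant, hence restricts to a pairing on the subspaces of Galois-invariant (Tate) classes. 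What needs to be checked is that the restricted pairing is still perfect, i.e. nondegenerate on the invariants.

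The key step is therefore the following linear-algebra fact: if $G=\wh{\bZ}$ (or the relevant Frobenius) acts on finite-dimensional $\bQ_l$-vector spaces $A$ and $B$ equipped with a perfect $G$-equivariant pairing $A\times B\to\bQ_l(0)$, and if the action on $A$ (equivalently $B$) is \emph{semisimple}, then the induced pairing $A^{G}\times B^{G}\to\bQ_l$ is perfect. The semisimplicity is exactly where the hypotheses enter: for Deligne--Lusztig varieties the Frobenius acts semisimply on $\ell$-adic cohomology (the eigenvalues are known explicitly, being monomials in $p$ and roots of unity, by Deligne--Lusztig theory / Lusztig's work, and in the present orthogonal case this is part of the package in \cite{HPGU22} and \cite{LZSW}). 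Given semisimplicity, write $A=A^{G}\oplus A'$ where $A'$ is the sum of the nontrivial isotypic pieces; the perfect pairing forces $B=B^{G}\oplus B'$ compatibly, with $A^{G}$ paired with $B^{G}$ and $A'$ paired with $B'$ and the cross terms vanishing because a $G$-equivariant pairing kills $\Hom_G$ between non-isomorphic simples (here: the trivial representation against any nontrivial one). Hence the restriction to $A^{G}\times B^{G}$ is perfect. Applying this with $A=H^{2r}(r)$, $B=H^{2(d-r)}(d-r)$, $G$ generated by the geometric Frobenius of $\bF_{p^2}$, and using \cite[Theorem 6.3.2]{LZSW} to identify $A^{G}=T_l^{2r}(V)$ and $B^{G}=T_l^{2(d-r)}(V)$, gives the proposition.

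The main obstacle I anticipate is not the duality argument itself but pinning down precisely what \cite[Theorem 6.3.2]{LZSW} delivers: one needs that (i) all of $H^{2r}$ is spanned by algebraic classes (so the Tate conjecture holds in these degrees, making $T_l^{2r}(V)$ literally the Frobenius-invariants after a suitable twist), and (ii) the classes of the special cycles $Y_{W^\bot/W}$ already exhaust those invariants—both of which should be contained in or immediate from that theorem together with the cellular/paving structure of Deligne--Lusztig varieties, but must be quoted carefully. A secondary point to verify is that Frobenius acts trivially (not just semisimply with some eigenvalues) on the relevant cycle classes after the Tate twist by $r$, so that ``Tate class'' and ``class in $T_l^{2r}(V)$'' coincide; this is built into the definition of $T_l^{2r}(V)$ via the cycle class map $\cl_r$ and poses no real difficulty.
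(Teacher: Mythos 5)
Your argument is correct and is essentially the route the paper takes: the paper states this proposition as a direct corollary of \cite[Theorem 6.3.2]{LZSW} with no further detail, and your semisimplicity-plus-Galois-equivariance argument is the standard content behind such a deduction from a Tate-conjecture statement. The only caveat, which you already flag yourself, is that one must confirm the cited theorem supplies both ingredients you use, namely that the classes of the \(Y_{W^\bot/W}\) span the Tate classes and that Frobenius acts semisimply on the relevant even-degree cohomology.
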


\subsection{Vector bundles}

Here we focus on \(N_{j_{W,V}}\).
It is imperative, appearing in the excess intersection formula.
By \((\cU_d, \cU_{d+1})\), we mean the universal object over \(Y_V\).
Put \(\cL_{Y_V}\colonequals\cU_{d+1}/\cU_d\).
Note \(j_{W,V}^*\cL_{Y_V}\simeq \cL_{Y_{W^\bot/W}}\) for a totally isotropic subspace \(W\subset V\).
\begin{prop} \label{tang}
We have
\[
    T_{Y_V/\bF_{p^2}}\simeq \ul\Hom(\cL_{Y_V}, V\otimes \cO_{Y_V}/(\cU_{d+1}+\Frob^*\cU_{d+1})).
\]
\end{prop}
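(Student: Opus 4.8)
The plan is to realise \(\sY_V\) as a smooth closed subscheme of \(\OGr(d+1)\) via Lemma \ref{cl} and to read off its tangent bundle from deformation theory; since \(Y_V\) is one of the two connected components \(Y_V^\pm\) of \(\sY_V\) and carries the restriction of the same universal data, it suffices to prove the displayed isomorphism over \(\sY_V\) and then restrict. Writing \(\iota\colon\sY_V\hookrightarrow\OGr(d+1)\) for the closed immersion of Lemma \ref{cl}, smoothness of \(\sY_V\) (Lemma \ref{sm}) makes \(T_{\sY_V/\bF_{p^2}}\) a subbundle of \(\iota^*T_{\OGr(d+1)/\bF_{p^2}}\). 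I would start by recalling the standard description of the tangent bundle of the orthogonal Grassmannian of maximal isotropic subspaces: because \(\cU_{d+1}^\bot=\cU_{d+1}\), the form \([*,*]\) induces \(V\otimes\cO/\cU_{d+1}\isoarrow\cU_{d+1}^\vee\), and a first-order deformation of \(\cU_{d+1}\) is a homomorphism \(\psi\colon\cU_{d+1}\to V\otimes\cO/\cU_{d+1}\) whose associated bilinear form \((x,y)\mapsto[\psi(x),y]\) on \(\cU_{d+1}\) is alternating; thus \(T_{\OGr(d+1)/\bF_{p^2}}\cong\wedge^2\cU_{d+1}^\vee\), sitting inside \(\ul\Hom(\cU_{d+1},V\otimes\cO/\cU_{d+1})\) as the antisymmetric homomorphisms.

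The core step is to identify \(T_{\sY_V/\bF_{p^2}}\subseteq\wedge^2\cU_{d+1}^\vee\) with the kernel of the restriction map \(\wedge^2\cU_{d+1}^\vee\to\wedge^2\cU_d^\vee\) along \(\cU_d\hookrightarrow\cU_{d+1}\); equivalently, a tangent vector \(\psi\colon\cU_{d+1}\to V\otimes\cO/\cU_{d+1}\) satisfies \(\psi(\cU_d)\subseteq(\cU_{d+1}+\Frob^*\cU_{d+1})/\cU_{d+1}\). For the inclusion ``\(\subseteq\)'' I would work at a point: a tangent vector is an \(\bF_{p^2}[\epsilon]\)-point \((\cU_d^\epsilon,\cU_{d+1}^\epsilon)\) of \(\sY_V\) lifting the given one, and since the \(p\)-power Frobenius annihilates \(\epsilon\) we have \(\Frob^*\cU_{d+1}^\epsilon=\Frob^*\cU_{d+1}\otimes\bF_{p^2}[\epsilon]\), a frozen summand. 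By the monomorphism property established in the proof of Lemma \ref{cl}, \(\cU_d^\epsilon=\cU_{d+1}^\epsilon\cap\Frob^*\cU_{d+1}^\epsilon\), so the deformed \(\cU_d\) stays inside this frozen \(\Frob^*\cU_{d+1}\); pushing everything into \((V/\cU_{d+1})\otimes\bF_{p^2}[\epsilon]\), the deformation direction of \(\cU_d\) — which is exactly the restriction of \(\psi\) — must land in the image of \(\Frob^*\cU_{d+1}\), namely \((\cU_{d+1}+\Frob^*\cU_{d+1})/\cU_{d+1}\). (This line equals the annihilator of \(\cU_d\) under \(V\otimes\cO/\cU_{d+1}\cong\cU_{d+1}^\vee\), which is why the condition coincides with ``the \(2\)-form \(\psi\) vanishes on \(\cU_d\)''.) For the reverse inclusion I would just compare ranks: the kernel bundle has rank \(d\) by the linear algebra in the next paragraph, while the deformation argument in the proof of Lemma \ref{sm} shows that lifting a point of \(\sY_V\) along \(\bF_{p^2}[\epsilon]\to\bF_{p^2}\) is the same datum as lifting \(\cU_d\) inside the frozen Grassmannian \(\Gr(d,\Frob^*\cU_{d+1})\), whose tangent space has dimension \(d\); hence \(T_{\sY_V/\bF_{p^2}}\) also has rank \(d\). (Alternatively one may invoke \(\dim Y_V=d\) from \cite[Proposition 3.6]{HPGU22}.)

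It remains to identify the kernel of \(\wedge^2\cU_{d+1}^\vee\to\wedge^2\cU_d^\vee\). Dualising \(0\to\cU_d\to\cU_{d+1}\to\cL_{Y_V}\to0\) and using that \(\wedge^2\) of the line bundle \(\cL_{Y_V}\) vanishes, this kernel is the image of \(\cL_{Y_V}^\vee\otimes\cU_{d+1}^\vee\) in \(\wedge^2\cU_{d+1}^\vee\), which is \(\cL_{Y_V}^\vee\otimes\cU_d^\vee=\ul\Hom(\cL_{Y_V},\cU_d^\vee)\). Finally, since \(\cU_{d+1}\) and \(\Frob^*\cU_{d+1}\) are maximal isotropic, hence self-orthogonal, their sum equals \(\cU_d^\bot\); so \([*,*]\) gives a perfect pairing \((V\otimes\cO/(\cU_{d+1}+\Frob^*\cU_{d+1}))\times\cU_d\to\cO\) and an identification \(\cU_d^\vee\cong V\otimes\cO/(\cU_{d+1}+\Frob^*\cU_{d+1})\), which turns the above into the asserted formula. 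Restricting from \(\sY_V\) to \(Y_V\) finishes the proof.

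The step I expect to be delicate is the infinitesimal computation in the middle paragraph: one has to be sure that the closed condition carving out \(\sY_V\) inside \(\OGr(d+1)\) behaves over \(\bF_{p^2}[\epsilon]\) exactly as stated — so that a tangent vector genuinely forces \(\cU_d^\epsilon\) into the frozen \(\Frob^*\cU_{d+1}\) and the rank count returns precisely \(d\) — and this is where I would lean on, and carefully re-run, the arguments in the proofs of Lemmas \ref{sm} and \ref{cl}. The surrounding identifications are routine, but because the conclusion is an isomorphism of vector bundles (not merely an abstract one) I would keep track of canonicity throughout.
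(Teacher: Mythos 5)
Your proposal is correct and follows essentially the same route as the paper: both exploit that $\Frob^*\cU_{d+1}$ is frozen over a square-zero thickening, identify the tangent directions inside $T_{\OGr(d+1)}\simeq\wedge^2(V\otimes\cO/\cU_{d+1})$ as those $\psi$ with $\psi(\cU_d)\subseteq(\cU_{d+1}+\Frob^*\cU_{d+1})/\cU_{d+1}$, use $\cU_d^\bot=\cU_{d+1}+\Frob^*\cU_{d+1}$ to rewrite the target, and conclude by comparing ranks of locally direct summands. The only difference is cosmetic: you package the reduction to $\cL_{Y_V}$ via the filtration on $\wedge^2\cU_{d+1}^\vee$, whereas the paper does it with explicit local splittings.
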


\begin{proof}
The left side is the Zariski sheaf by which an open subset \(S\subseteq Y_V\) is carried to the inverse image by \(Y_V(\Spec \cO_S[\epsilon]/(\epsilon^2))\to Y_V(S)\) of the open immersion.
We know an isomorphism between \(T_{\OGr(d+1)/\bF_{p^2}}\) pulled back to \(Y_V\) and \(\wedge^2(V\otimes\cO_S/\cU_{d+1})\), namely the subsheaf of \(\ul\Hom(\cU_{d+1}, V\otimes\cO_S/\cU_{d+1})\) made of sections \(l\) such that
\begin{equation}
    [l(u),u']+[u,l(u')] = 0, \label{alt}
\end{equation}
where \(u\) and \(u'\) are sections of \(\cU_{d+1}\) and the pairings are well-defined because \(\cU_{d+1}\) is isotropic.
It is enough to compare sections of the left and right side of the proposition on sufficiently small affine \(S\subseteq Y_V\) via the last isomorphism.

Put
\begin{align*}
    \cL_d&\colonequals \cU_d|_S, \\
    \cL_{d+1}&\colonequals \cU_{d+1}|_S, \\
    \cL&\colonequals \cL_{Y_V}|_S, \\
    \cM&\colonequals \Frob^*\cL_{d+1}/\cL_d, \\
    \cM_d&\colonequals V\otimes \cO_S/(\cL_{d+1}+\Frob^*\cL_{d+1}).
\end{align*}
We may assume that these are all free.
We take splittings \(\cL\to \cL_{d+1}\), \(\cM\to \Frob^*\cL_{d+1}\) and \(\cM_d\to V\otimes \cO_S\).
Put \(\cM_{d+1}\colonequals\cM_d\oplus\cM\).

Let \((\wt\cL_d,\wt\cL_{d+1})\in T_{Y_V/\bF_{p^2}}(S)\).
This is determined by some \(l\colon \cL_{d+1}\to \cM_{d+1}\).
We have \(\Frob^*\wt\cL_{d+1} = \Frob^*\cL_{d+1}\otimes_{\cO_S}\cO_S[\epsilon]/(\epsilon^2)\) because \(\epsilon^p=0\).
Thus \(\Frob^*\wt\cL_{d+1}\) includes both \(\wt\cL_d\) and \(\cL_{d}\otimes_{\cO_S} \cO_S[\epsilon]/(\epsilon^2)\).
We also have \(V\otimes \epsilon\cO_S\cap \Frob^*\wt\cL_{d+1} = \epsilon\Frob^*\cL_{d+1} \subset V\otimes \cO_S[\epsilon]/(\epsilon^2)\).
Recall that \(\wt\cL_d\) is generated by the image of \(\id_{\cL_d} + \epsilon l|_{\cL_d}\).
The last three sentences imply that \(l(\cL_d)\subseteq\cM\).
By this, the perfectness of \([*,*]\) on \(\cL\times \cM\), and (\ref{alt}) for sections of \(\cL_d\) and \(\cL\),
\[
    l|_\cL\colon \cL\to \cM_{d+1}\cap\cL^\bot\simeq \cL_d^\vee\simeq V\otimes \cO_S/(\cL_{d+1}+\Frob^*\cL_{d+1})
\]
knows \(l|_{\cL_d}\) as well.

This means that the left side of the proposition is in the right side through the similar isomorphism for \(\OGr(d+1)\).
We get the proposition by comparing the rank and noting that the both sides are locally direct summands of the both sides of the known isomorphism.
\end{proof}

\begin{cor} \label{normal}
We have \(N_{j_{W,V}}\simeq \ul\Hom(\cL_{Y_{W^\bot/W}}, (V/W^\bot)\otimes\cO_{Y_{W^\bot/W}})\).
\end{cor}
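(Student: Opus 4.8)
The plan is to realise the asserted bundle as the cokernel of $\der j_{W,V}$, feeding the shape of the source into Proposition \ref{tang}. Put $e\colonequals\dim_{\bF_p}W$; then $W^\bot/W$ is a nonsplit nondegenerate quadratic space over $\bF_p$ of dimension $2(d-e+1)$, so $Y_{W^\bot/W}$ is defined and has dimension $d-e$. First I would record that $j_{W,V}$ is a closed immersion, being the composite of the closed immersion of Lemma \ref{cl} for the space $W^\bot/W$ with the closed immersion sending an isotropic subspace of $(W^\bot/W)\otimes\cO_S$ to its preimage in $W^\bot\otimes\cO_S\subseteq V\otimes\cO_S$. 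As $Y_V$ and $Y_{W^\bot/W}$ are smooth over $\bF_{p^2}$ by Lemma \ref{sm}, the immersion is regular and there is an exact sequence
\[
    0\to T_{Y_{W^\bot/W}/\bF_{p^2}}\xrightarrow{\der j_{W,V}}j_{W,V}^*T_{Y_V/\bF_{p^2}}\to N_{j_{W,V}}\to 0,
\]
so it suffices to compute this cokernel.

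Next I would analyse $j_{W,V}^*\cU_{d+1}$. Writing $(\cU'_{d-e},\cU'_{d-e+1})$ for the universal object over $Y_{W^\bot/W}$ and $\pi\colon W^\bot\otimes\cO_{Y_{W^\bot/W}}\twoheadrightarrow (W^\bot/W)\otimes\cO_{Y_{W^\bot/W}}$ for the quotient, the construction of $j_{W,V}$ gives $j_{W,V}^*\cU_{d+1}=\pi^{-1}(\cU'_{d-e+1})$ and $j_{W,V}^*\cU_d=\pi^{-1}(\cU'_{d-e})$ (recovering in passing $j_{W,V}^*\cL_{Y_V}\simeq\cL_{Y_{W^\bot/W}}$). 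Since $W$ is defined over $\bF_p$, the subbundle $W^\bot\otimes\cO$ is stable under $\Frob^*$, so $\Frob^*j_{W,V}^*\cU_{d+1}=\pi^{-1}(\Frob^*\cU'_{d-e+1})$ and hence $j_{W,V}^*(\cU_{d+1}+\Frob^*\cU_{d+1})=\pi^{-1}(\cU'_{d-e+1}+\Frob^*\cU'_{d-e+1})$. The two-step filtration by $W^\bot\otimes\cO$ then yields the exact sequence
\[
    0\to \frac{(W^\bot/W)\otimes\cO_{Y_{W^\bot/W}}}{\cU'_{d-e+1}+\Frob^*\cU'_{d-e+1}}\to \frac{V\otimes\cO_{Y_{W^\bot/W}}}{j_{W,V}^*(\cU_{d+1}+\Frob^*\cU_{d+1})}\to (V/W^\bot)\otimes\cO_{Y_{W^\bot/W}}\to 0.
\]
Applying the exact functor $\ul\Hom(\cL_{Y_{W^\bot/W}},-)$ (exact as $\cL_{Y_{W^\bot/W}}$ is a line bundle) and invoking Proposition \ref{tang}, once for $W^\bot/W$ and once for $V$ after pull-back along $j_{W,V}$, the first two terms become $T_{Y_{W^\bot/W}/\bF_{p^2}}$ and $j_{W,V}^*T_{Y_V/\bF_{p^2}}$, giving
\[
    0\to T_{Y_{W^\bot/W}/\bF_{p^2}}\xrightarrow{\ \iota\ }j_{W,V}^*T_{Y_V/\bF_{p^2}}\to \ul\Hom(\cL_{Y_{W^\bot/W}},(V/W^\bot)\otimes\cO_{Y_{W^\bot/W}})\to 0.
\]

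To conclude I would identify $\iota$ with $\der j_{W,V}$; comparing the two displayed sequences then gives $N_{j_{W,V}}\simeq\ul\Hom(\cL_{Y_{W^\bot/W}},(V/W^\bot)\otimes\cO_{Y_{W^\bot/W}})$. As $\iota$ and $\der j_{W,V}$ are both subbundle inclusions of the same rank $d-e$, it is enough to check that $\der j_{W,V}$ lands in the kernel of the surjection in the second sequence. For this I would unwind the deformation-theoretic description in the proof of Proposition \ref{tang}: over a small affine $S\subseteq Y_{W^\bot/W}$ a tangent vector is a first-order deformation of $(\cU'_{d-e}|_S,\cU'_{d-e+1}|_S)$, and $\der j_{W,V}$ carries it to the deformation of $(j_{W,V}^*\cU_d|_S,j_{W,V}^*\cU_{d+1}|_S)$ obtained by applying $\pi^{-1}$, which by construction satisfies $W\otimes\cO_S[\epsilon]/(\epsilon^2)\subseteq\wt\cL_{d+1}\subseteq W^\bot\otimes\cO_S[\epsilon]/(\epsilon^2)$; hence the homomorphism $l$ attached to it in that proof maps $\cL_{d+1}$ into $W^\bot\otimes\cO_S$, and since $W^\bot\otimes\cO$ is $\Frob^*$-stable and contains $\cL_{d+1}$ (so also $\Frob^*\cL_{d+1}$) the reduced homomorphism $l|_{\cL}$ lands in
\[
    \frac{W^\bot\otimes\cO_S}{\cL_{d+1}+\Frob^*\cL_{d+1}}=\frac{(W^\bot/W)\otimes\cO_S}{\cL'_{d-e+1}+\Frob^*\cL'_{d-e+1}},
\]
which is exactly the subsheaf cutting out the image of $\iota$.

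I expect the main obstacle to be precisely this last matching of $\der j_{W,V}$ with $\iota$: the isomorphism of Proposition \ref{tang} is built through a somewhat delicate first-order computation, in particular through the way $l|_{\cL_d}$ is recovered from $l|_{\cL}$, so tracing $j_{W,V}$ through it requires care with the flag condition $W\subseteq\cL_d\subseteq\cL_{d+1}\subseteq W^\bot$. Everything else is formal once the behaviour of $j_{W,V}^*\cU_{d+1}$ under $\pi^{-1}$ and $\Frob^*$ is in place.
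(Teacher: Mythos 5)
Your proposal is correct and follows essentially the same route as the paper: the paper's proof also uses the normal bundle exact sequence and the ``compatible isomorphisms'' of Proposition \ref{tang} for $T_{Y_{W^\bot/W}/\bF_{p^2}}$ and $j_{W,V}^*T_{Y_V/\bF_{p^2}}$ together with $j_{W,V}^*\cL_{Y_V}\simeq\cL_{Y_{W^\bot/W}}$. The only difference is that you spell out the compatibility (the two-step filtration by $W^\bot\otimes\cO$ and the identification of $\der j_{W,V}$ with $\iota$) that the paper leaves implicit.
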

\begin{proof}
Note the short exact sequence
\[
    0\to T_{Y_{W^\bot/W}/\bF_{p^2}}\to j_{W,V}^*T_{Y_V/\bF_{p^2}}\to N_{j_{W,V}}\to 0.
\]
The corollary follows from the compatible isomorphisms in Proposition \ref{tang} for the left and middle terms of the sequence together with the isomorphism \(j_{W,V}^*\cL_{Y_V}\simeq \cL_{Y_{W^\bot/W}}\).
\end{proof}

\begin{ex} \label{1D}
We calculate \(\cL_{Y_V}\) by hand in the case where \(d = 1\).
Fix \(a\in \bF_p\) that is not a square of an element in \(\bF_p\).
Also fix \(b\in \bF_{p^2}\) satisfying \(b^2 = a\).
There exists a basis \(X_1,\dots, X_4\) of \(V\) such that
\[
    [x_1X_1+\dots+x_4X_4, x_1X_1+\dots+x_4X_4] = x_1^2-x_2^2+ax_3^2-x_4^2
\]
for \(x_1,\dots, x_4\in\bF_p\) and that
\begin{align*}
    e_1 &= X_1+X_2\in V, \\
    f_1 &= \frac{X_1 - X_2}{2}\in V, \\
    e_2 &= X_3\otimes b^{-1} + X_4\otimes 1\in V\otimes \bF_{p^2}, \\
    f_2 &= \frac{X_3\otimes b^{-1} - X_4\otimes 1}{2}\in V\otimes \bF_{p^2}.
\end{align*}

We consider \(\Spin(V)\).
Let \(\Cl^0(V)\) be the even Clifford algebra of \(V\).
Its center is \(\bF_p\oplus \bF_pX_1\dotsm X_4\), isomorphic to \(\bF_{p^2}\) by carrying \(X_1\dotsm X_4\) to \(b\).
Moreover, \(\Cl^0(V)\) is isomorphic to the algebra \(M_2(\bF_{p^2})\) by
\begin{alignat*}{2}
    X_1X_2&\mapsto \begin{pmatrix} 
        1 & 0 \\
        0 & -1
    \end{pmatrix},\hspace{1mm}
    &X_1X_3\mapsto \begin{pmatrix} 
        0 & -b \\
        b & 0
    \end{pmatrix},\hspace{1mm}
    &X_1X_4\mapsto \begin{pmatrix} 
        0 & 1 \\
        1 & 0
    \end{pmatrix}, \\
    X_2X_3&\mapsto \begin{pmatrix} 
        0 & b \\
        b & 0
    \end{pmatrix},\hspace{1mm}
    &X_2X_4\mapsto \begin{pmatrix} 
        0 & -1 \\
        1 & 0
    \end{pmatrix},\hspace{1mm}
    &X_3X_4\mapsto \begin{pmatrix} 
        b & 0 \\
        0 & -b
    \end{pmatrix}.
\end{alignat*}
We identify both sides of the isomorphism.
Let \(\Gamma^0\subset \Cl^0(V)\) be the even Clifford group.
We also see it as an algebraic group over \(\bF_p\).
The spinor norm of a section of \(\Gamma^0\) considered over some algebra is given by its determinant.
Thus \(\Spin (V) = \Res_{\bF_{p^2}/\bF_p}\SL_2\), noting the exceptional isomorphism \(D_2\simeq A_1\times A_1\) of Dynkin diagrams.

We embed \(V\) into \(\M_2(\bF_{p^2})\) by
\begin{alignat*}{4}
    &X_1&\mapsto& \begin{pmatrix} 
        0 & -1 \\
        1 & 0
    \end{pmatrix},\hspace{1mm}
    &X_2&\mapsto& \begin{pmatrix} 
        0 & -1 \\
        -1 & 0
    \end{pmatrix}, \\
    &X_3&\mapsto& \begin{pmatrix} 
        -b & 0 \\
        0 & -b
    \end{pmatrix},\hspace{1mm}
    &X_4&\mapsto& \begin{pmatrix} 
        1 & 0 \\
        0 & -1
    \end{pmatrix}.
\end{alignat*}
The last \(\M_2(\bF_{p^2})\) is unrelated to \(\Cl^0(V)\).
By the embedding, the action of \(\Gamma^0\) on \(V\) translates into \(\gamma v = \Frob({}^t\!\gamma)^{-1}v{}^t\!\gamma\).
This means that \(\Spin (V)_{\bF_{p^2}}\simeq \SL_2^2\) acts on \(V\otimes\bF_{p^2}\simeq \M_2\) by \((\alpha, \beta) v = {}^t\!\beta^{-1}v\alpha\).

About \(\SO(V)_{\bF_{p^2}}\), recall from \cite[\S 3.2]{HPGU22} the Borel and maximal parabolic subgroups
\begin{alignat*}{2}
    &B &=& \Stab_{\SO(V)_{\bF_{p^2}}}(\bF_{p^2}e_1) \\
    \subset&P^+ &=& \Stab_{\SO(V)_{\bF_{p^2}}}(\bF_{p^2}e_1\oplus\bF_{p^2}e_2).
\end{alignat*}
Let \(B'\) and \(P'^+\) be the inverse image by \(\Spin(V)_{\bF_{p^2}}\to \SO(V)_{\bF_{p^2}}\) of \(B\) and \(P^+\), respectively.
The former is the product of two copies of the group of lower triangular matrices with determinant \(1\).
The latter is the product of the same group and \(\SL_2\).
We similarly recall and define \(P^-\) and \(P'^-\), replacing \(e_2\) with \(f_2\).
Our \(P'^-\) switches the order of direct product in \(P'^+\).
Because of the proof of \cite[Proposition 3.6]{HPGU22}, in our case \(Y_V^+\) is the graph of \(\bP^1\simeq\Spin(V)_{\bF_{p^2}}/P'^+\to \Spin(V)_{\bF_{p^2}}/P'^-\simeq\bP^1\), where the arrow is the relative Frobenius.
Furthermore, \(\cL_{Y_V^+}\) is pulled back from \(\OGr^+(1,2)\simeq \Spin(V)_{\bF_{p^2}}/B'\simeq (\bP^1)^2\).
We calculate the bundle on \(\Spin(V)_{\bF_{p^2}}/B'\).
A section
\[
    \left(\begin{pmatrix}
        p&0 \\
        q&p^{-1}
    \end{pmatrix},
    \begin{pmatrix}
        p'&0 \\
        q'&p'^{-1}        
    \end{pmatrix}\right)
\]
of \(B'\) considered over some algebra acts on the fiber \(\langle e_1,e_2\rangle/\langle e_1\rangle\) of the bundle over the image of unit by carrying \(e_2\) to
\[
    \prescript{t\!}{}{
    \begin{pmatrix}
        p'&0 \\
        q'&p'^{-1}
    \end{pmatrix}^{-1}}e_2\prescript{t\!}{}{\begin{pmatrix}
        p&0 \\
        q&p^{-1}        
    \end{pmatrix}}=\begin{pmatrix}
        0&2p^{-1}q' \\
        0&-2p^{-1}p'
    \end{pmatrix}
    =p^{-1}p'e_2
\]
in the same fiber modulo \(e_1\).
Thus the line bundle is \(\pr_1^*\cO_{\bP^1}(-1)\otimes\pr_2^*\cO_{\bP^1}(1)\), where \(\pr_i\) is the \(i\)-th projection for \(i = 1, 2\).
This pulls back to \(\cL_{Y_V^+} = \cO_{\bP^1}(p-1)\).
We also have \(\cL_{Y_V^-} = \cO_{\bP^1}(p-1)\), substituting \(\langle e_1,f_2\rangle\) for \(\langle e_1, e_2\rangle\).
\end{ex}

\section{Chern classes} \label{sec:Chern}

Here we discuss the degree formula and the finite field version of the constant term formula in Kudla's geometric Siegel--Weil formula for GSpin Shimura varieties.
The argument depends on the inductive structure of special cycles.
We assume \(d>0\).
We identify the top rational Chow group of \(Y_{V,\ol{\bF_p}}\) and of its special cycles with \(\bQ\).

\begin{lem} \label{1Dcyc}
    Let \(W\subset V\) be a \((d-1)\)-dimensional isotropic subspace.
    Then \(c_1(\cL_{Y_V}^\vee)[Y_{W^\bot/W}] = -p+1\) in \(\Ch^d(Y_{V,\ol{\bF_p}})_{\bQ}\).
\end{lem}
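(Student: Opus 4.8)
The plan is to reduce the assertion to the explicit computation of Example \ref{1D}. First I would note that, since $V$ is nonsplit of dimension $2(d+1)$ and $W\subset V$ is totally isotropic of dimension $d-1$, the quotient $W^\bot/W$ is a nondegenerate quadratic space over $\bF_p$ of dimension $2(d+1)-2(d-1)=4$, and that it is again nonsplit: $V$ is the orthogonal sum of $W^\bot/W$ and the hyperbolic space on $W$, which is split, so a split $W^\bot/W$ would make $V$ split, contrary to hypothesis. Hence $W^\bot/W$ is one of the quadratic spaces treated in Example \ref{1D} (with the ``$d$'' there equal to $1$), so after base change $Y_{W^\bot/W,\ol{\bF_p}}\cong\bP^1_{\ol{\bF_p}}$ and $\cL_{Y_{W^\bot/W}}\simeq\cO_{\bP^1}(p-1)$; as noted at the end of Example \ref{1D} this holds on either of the two components, so the ambiguity in the choice of the $\pm$-component is immaterial here.

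Next I would invoke the description of the components of the $Y_{V,r}$ stated just before Proposition \ref{Poincare}: the morphism $j_{W,V}$ realizes $Y_{W^\bot/W}$ isomorphically onto a closed one-dimensional subvariety of $Y_V$, so that $(j_{W,V})_*[Y_{W^\bot/W}]=[Y_{W^\bot/W}]_{Y_V}$ in $\Ch^{d-1}(Y_{V,\ol{\bF_p}})$. Combining the isomorphism $j_{W,V}^*\cL_{Y_V}\simeq\cL_{Y_{W^\bot/W}}$ recorded in \S\ref{sec:Vars} with the projection formula for the proper morphism $j_{W,V}$ then yields
\begin{align*}
 c_1(\cL_{Y_V}^\vee)\cdot[Y_{W^\bot/W}]_{Y_V}
 &= (j_{W,V})_*\bigl(c_1(j_{W,V}^*\cL_{Y_V}^\vee)\cap[Y_{W^\bot/W}]\bigr)\\
 &= (j_{W,V})_*\bigl(c_1(\cL_{Y_{W^\bot/W}}^\vee)\cap[Y_{W^\bot/W}]\bigr).
\end{align*}
Since proper pushforward is compatible with the degree maps by which the top rational Chow groups were identified with $\bQ$ at the start of this section, the right-hand side equals the degree of $c_1(\cO_{\bP^1}(1-p))$, namely $1-p=-p+1$, which is the claim.

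I do not expect a serious obstacle. The substantive inputs are only (i) the nonsplitness of $W^\bot/W$, which is the discriminant remark above, and (ii) the compatibility $j_{W,V}^*\cL_{Y_V}\simeq\cL_{Y_{W^\bot/W}}$, already available from \S\ref{sec:Vars}; everything else is the projection formula and functoriality of the degree map. The one mildly delicate piece of bookkeeping is to confirm that the orientation conventions of Example \ref{1D} match the component of $Y_V$ that contains $Y_{W^\bot/W}$, but this is harmless since $\cL$ has degree $p-1$ on both components in the $d=1$ picture.
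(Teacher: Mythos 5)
Your proposal is correct and follows essentially the same route as the paper: pull back $\cL_{Y_V}$ along $j_{W,V}$ using $j_{W,V}^*\cL_{Y_V}\simeq\cL_{Y_{W^\bot/W}}$, apply the projection formula, and evaluate via the explicit computation $\cL_{Y_{W^\bot/W}}\simeq\cO_{\bP^1}(p-1)$ from Example \ref{1D}. The only additions are your (correct, and worth making explicit) checks that $W^\bot/W$ is a four-dimensional nonsplit space and that both components in the $d=1$ case give the same degree.
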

\begin{proof}
By the projection formula and Example \ref{1D},
\[
    c_1(\cL_{Y_V}^\vee)[Y_{W^\bot/W}] = j_{W,V,*}(c_1(j_{W,V}^*\cL_{Y_V}^\vee)) = j_{W,V,*}(c_1(\cL_{Y_{W^\bot/W}}^\vee)) = -p+1.
\]
\end{proof}

\begin{prop} \label{induct}
    We have in \(\Ch^1(Y_{V,\ol{\bF_p}})_\bQ\)
    \[
        c_1(\cL_{Y_V}^\vee) = \frac{-p+1}{p^{d+1}+1}\sum_{W\in\bP(V) \hspace{1mm}\mathrm{isotropic}} [Y_{W^\bot/W}].
    \]
\end{prop}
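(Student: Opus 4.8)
The plan is to exploit the Tate-type perfect pairing of Proposition \ref{Poincare} in the case $r = 1$, $d - r = d - 1$. Both sides of the asserted identity live in $T_l^2(V)\subseteq H^2(Y_{V,\ol{\bF_p}},\bQ_l)(1)$: the right-hand side manifestly so, and the left-hand side because $c_1(\cL_{Y_V}^\vee)$ lies in $T_l^2(V)$ by the cycle-class description (this is the input from \cite[Theorem 6.3.2]{LZSW} recorded before Proposition \ref{Poincare}; one should cite it, or else argue directly that $c_1(\cL_{Y_V}^\vee)$ is a combination of the $\cl_1(Y_{W^\bot/W})$ using that the line bundle is pulled back from $\OGr(d+1)$ and restricts suitably). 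Since $T_l^2(V)\times T_l^{2(d-1)}(V)\to\bQ_l$ is a perfect pairing and $T_l^{2(d-1)}(V)$ is spanned by the classes $[Y_{U^\bot/U}]$ for $1$-dimensional isotropic $U\subset V$, it suffices to check that the two sides of the claimed equation have the same intersection number against $[Y_{U^\bot/U}]$ for every such $U$.

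First I would fix a $1$-dimensional isotropic $U$ and compute the left-hand side paired with $[Y_{U^\bot/U}]$. By the projection formula this is $c_1(j_{U,V}^*\cL_{Y_V}^\vee)[Y_{U^\bot/U}] = c_1(\cL_{Y_{U^\bot/U}}^\vee)[Y_{U^\bot/U}]$, an intersection number computed on the $(d-1)$-dimensional variety $Y_{U^\bot/U}$ attached to the quadratic space $U^\bot/U$ of dimension $2d$. Iterating the same restriction $d-1$ more times — i.e. choosing a full isotropic flag refining $U$ — reduces this to the $d=1$ computation, which is exactly Lemma \ref{1Dcyc} combined with Example \ref{1D}; one gets that the left-hand side pairs to $-p+1$ times the number of top-dimensional special cycles $Y_{W^\bot/W}$ (with $W\supset U$, $\dim W = d-1$) meeting $Y_{U^\bot/U}$ in the expected way, but in fact the cleanest route is: $c_1(\cL_{Y_V}^\vee)[Y_{U^\bot/U}]$ equals $c_1(\cL_{Y_{U^\bot/U}}^\vee)$ paired with the fundamental class of $Y_{U^\bot/U}$, and this single number can be read off by pushing down along a chain of $j$'s to the $d=1$ case, giving the value $(-p+1)$ as in Lemma \ref{1Dcyc} — here one must be careful that what is really being computed is $c_1^{d-1}$-type data, so more honestly one should induct on $d$ and assume the proposition for smaller quadratic spaces.

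That last point is the real structure of the argument, so let me restate the plan as an induction on $d$. For the right-hand side, pairing $\sum_{W\in\bP(V)\text{ isotropic}}[Y_{W^\bot/W}]$ with $[Y_{U^\bot/U}]$ requires knowing, for each line $W$, the intersection number $[Y_{W^\bot/W}]\cdot[Y_{U^\bot/U}]$ inside $Y_V$. When $W = U$ this is a self-intersection, computed via the excess/self-intersection formula using the normal bundle $N_{j_{U,V}} \simeq \ul\Hom(\cL_{Y_{U^\bot/U}},(V/U^\bot)\otimes\cO)$ from Corollary \ref{normal}, whose top Chern class feeds into $c_1(\cL_{Y_{U^\bot/U}}^\vee)$-powers on $Y_{U^\bot/U}$; when $W\neq U$ one analyzes $Y_{W^\bot/W}\cap Y_{U^\bot/U}$, which should be a special cycle for the smaller space $(U+W)^\bot/(U+W)$ (or empty, when $U+W$ is not isotropic), again handled by induction. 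Collecting terms, the right-hand side paired with $[Y_{U^\bot/U}]$ becomes a weighted count of isotropic lines in $V$, organized by their relative position to $U$; the normalizing constant $\frac{-p+1}{p^{d+1}+1}$ is forced precisely so that this weighted count matches the value $-p+1$ from the left-hand side. I expect the main obstacle to be the bookkeeping in this last step: correctly enumerating isotropic lines $W\subset V$ in each incidence class relative to $U$ over $\bF_{p^2}$ for the nonsplit form (the count of isotropic points of a $2n$-dimensional nonsplit quadric, and the conditional counts once one line is fixed), and verifying that the excess-intersection contribution for $W=U$ assembles with the transverse contributions $W\neq U$ into exactly $p^{d+1}+1$; the self-intersection term, which brings in $c_1(N_{j_{U,V}})$ and hence a factor involving $\dim(V/U^\bot)=1$ twisted by $\cL_{Y_{U^\bot/U}}$, is where a sign or a power of $p$ is easiest to misplace.
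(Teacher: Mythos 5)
Your overall strategy is the paper's: use the perfect pairing of Proposition \ref{Poincare} to reduce the codimension-one identity to intersection numbers against special cycles, evaluate the left side by Lemma \ref{1Dcyc}, and evaluate the right side by a three-way case analysis (containment handled by excess intersection via Corollary \ref{normal}, transverse point, empty) plus point counts over \(\bF_p\). But there is a concrete error in the duality step that derails the computation: you pair against \([Y_{U^\bot/U}]\) for \emph{one-dimensional} isotropic \(U\). Those are codimension-one cycles, so \(c_1(\cL_{Y_V}^\vee)\cdot[Y_{U^\bot/U}]\) lies in \(\Ch^2(Y_{V,\ol{\bF_p}})_\bQ\) and is not a number once \(d>2\); in particular Lemma \ref{1Dcyc}, which concerns \((d-1)\)-dimensional \(W\), does not compute it, and the "value \(-p+1\)" you quote for the left side is not defined. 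The correct dual generators of \(T_l^{2(d-1)}(V)\) are the \emph{curve} classes \([Y_{W'^\bot/W'}]\) for \((d-1)\)-dimensional isotropic \(W'\subset V\) (the indexing in the paper's definition of \(T_l^{2r}(V)\) is admittedly confusing on this point, but only this choice makes the pairing land in \(\bQ_l\)). With that choice every term is a number, Lemma \ref{1Dcyc} gives the left side as \(-p+1\) with no induction at all, and the right side decomposes according to whether the line \(W\) satisfies \(W\subseteq W'\) (excess case: \(N_{j_{W,V}}\) has rank one and first Chern class \(c_1(\cL_{Y_{W^\bot/W}}^\vee)\), so each such \(W\) contributes \(-p+1\), and there are \((p^{d-1}-1)/(p-1)\) of them), \(W\oplus W'\) isotropic (a reduced point, contributing \(1\), with \((p^2+1)p^{d-1}\) such \(W\)), or \(W\oplus W'\) non-isotropic (empty); the total is \(p^{d+1}+1\) up to the stated normalization.

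You do sense the problem ("what is really being computed is \(c_1^{d-1}\)-type data") but the proposed repair — restating the argument as an induction on \(d\) while still pairing against \([Y_{U^\bot/U}]\) for lines \(U\) — is not carried out and does not resolve the dimension mismatch; in particular your "\(W=U\) self-intersection" case and the claim that the weighted count of isotropic \emph{lines} relative to \(U\) assembles into \(p^{d+1}+1\) are artifacts of the wrong choice of dual cycles (in the paper the excess case is \(W\subseteq W'\) with \(\dim W'=d-1\), not a self-intersection). Two further items are missing: the base case \(d=1\), which requires Example \ref{1D} together with the count \(p^2+1\) of isotropic lines in \(V\); and, as you rightly flag, the verification that \(c_1(\cL_{Y_V}^\vee)\) lies in \(T_l^2(V)\) so that the perfect pairing detects the difference of the two sides — this is where the Tate-conjecture input from \cite[Theorem 6.3.2]{LZSW} is genuinely used, and the paper itself leaves it implicit.
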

\begin{proof}
    When \(d = 1\), the proposition is the consequence of Example \ref{1D} and \cite[Corollary 3.2.3]{LZSW}, by which the number of isotropic lines in \(V\) is \(p^2+1\).

    We come to the case where \(d>1\).
    By Proposition \ref{Poincare}, it is enough to show that if \(W'\subset V\) is isometric of dimension \(d-1\), then
    \begin{equation}
        c_1(\cL_{Y_V}^\vee)[Y_{W'^\bot/W'}] = \frac{-p+1}{p^{d+1}+1}\sum_{W\in\bP(V)\hspace{1mm}\mathrm{isotropic}} [Y_{W^\bot/W}][Y_{W'^\bot/W'}]. \label{prdw/cyc}
    \end{equation}
    We have three cases about each term in the right side.
    \begin{enumerate}
        \item\label{excess} First, we deal with the case where \(W\subseteq W'\).
            By the combination of the excess intersection formula, Corollary \ref{normal} and Lemma \ref{1Dcyc},
            \begin{align*}
                [Y_{W^\bot/W}][Y_{W'^\bot/W'}] &= j_{W,V,*}(c_1(N_{j_{W,V}})[Y_{W'^\bot/W'}]_{Y_{W^\bot/W}}) \\
                &= j_{W,V,*}(c_1(\cL_{Y_{W^\bot/W}}^\vee)[Y_{W'^\bot/W'}]_{Y_{W^\bot/W}})= -p+1.
            \end{align*}
        \item\label{just} Second, we handle the case where \(W\oplus W'\subset V\) is isotropic.
            Then \(Y_{W^\bot/W}\times_{Y_V} Y_{W'^\bot/W'}=Y_{W^\bot\cap W'^\bot/W\oplus W'}\simeq \Spec \bF_{p^2}\).
            Thus \([Y_{W^\bot/W}][Y_{W'^\bot/W'}] = 1\).
        \item\label{empty} Finally, here is the case where \(W\oplus W'\subset V\) is not isotropic.
            Then \([Y_{W^\bot/W}][Y_{W'^\bot/W'}] = 0\) as the related intersection is empty.
    \end{enumerate}
    The number of \(W\) as in (\ref{excess}) is \(\sharp \bP(W') = (p^{d-1}-1)/(p-1)\).
    The similar number for (\ref{just}) is the number of \(W\in\bP(W'^\bot/W')\) that is isotropic times \(\sharp W'\). This equals \((p^2+1)p^{d-1}\) by \cite[Corollary 3.2.3]{LZSW}.
    Thus the right side of (\ref{prdw/cyc}) is
    \[
        \frac{-p+1}{p^{d+1}+1}\left(\frac{p^{d-1}-1}{p-1}(-p+1)+(p^2+1)p^{d-1}\cdot 1\right) = -p+1.
    \]
    This is the left side by Lemma \ref{1Dcyc}.
\end{proof}

\begin{prop} \label{analog}
We have in \(\Ch^d(Y_{V,\ol{\bF_p}})_{\bQ}\)
\[
    c_1(\cL_{Y_V}^\vee)^d = \prod_{i = 1}^d (-p^i+1).
\]
\end{prop}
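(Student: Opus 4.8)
The plan is to argue by induction on $d$. For the base case $d=1$ I would simply invoke Example \ref{1D}: there $\cL_{Y_V}\simeq\cO_{\bP^1}(p-1)$, so under the degree identification $\Ch^1(Y_{V,\ol{\bF_p}})_\bQ\cong\bQ$ one has $c_1(\cL_{Y_V}^\vee)=1-p=-p+1=\prod_{i=1}^1(-p^i+1)$.

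For the inductive step, assume $d\geq 2$ and that the formula holds for every admissible space of dimension $2d$. I would factor $c_1(\cL_{Y_V}^\vee)^d=c_1(\cL_{Y_V}^\vee)^{d-1}\cdot c_1(\cL_{Y_V}^\vee)$ and substitute Proposition \ref{induct} into the last factor to get
\[
    c_1(\cL_{Y_V}^\vee)^d=\frac{-p+1}{p^{d+1}+1}\sum_{W\in\bP(V)\text{ isotropic}}c_1(\cL_{Y_V}^\vee)^{d-1}\,[Y_{W^\bot/W}].
\]
For each isotropic line $W$, the projection formula together with $j_{W,V}^*\cL_{Y_V}\simeq\cL_{Y_{W^\bot/W}}$ gives
\[
    c_1(\cL_{Y_V}^\vee)^{d-1}\,[Y_{W^\bot/W}]=j_{W,V,*}\!\left(c_1\!\left(\cL_{Y_{W^\bot/W}}^\vee\right)^{d-1}\right).
\]
Now $W^\bot/W$ is a nondegenerate quadratic space over $\bF_p$ of dimension $2d$, and it stays nonsplit: choosing $W'\subset V$ with $[W,W']\neq 0$ realizes $V\simeq(W\oplus W')\perp(W^\bot/W)$ with $W\oplus W'$ hyperbolic, so $W^\bot/W$ inherits nonsplitness from $V$. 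Hence the inductive hypothesis applies to $Y_{W^\bot/W}$, yielding $c_1(\cL_{Y_{W^\bot/W}}^\vee)^{d-1}=\prod_{i=1}^{d-1}(-p^i+1)$ in $\Ch^{d-1}(Y_{W^\bot/W,\ol{\bF_p}})_\bQ\cong\bQ$.

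To finish I would assemble the pieces: since proper pushforward of zero-cycles preserves degree, $j_{W,V,*}$ is the identity under the two identifications with $\bQ$, so each summand equals $\prod_{i=1}^{d-1}(-p^i+1)$; and by \cite[Corollary 3.2.3]{LZSW} the number of isotropic lines in the $2(d+1)$-dimensional nonsplit space $V$ is $(p^d-1)(p^{d+1}+1)/(p-1)$. Therefore
\[
    c_1(\cL_{Y_V}^\vee)^d=\frac{-p+1}{p^{d+1}+1}\cdot\frac{(p^d-1)(p^{d+1}+1)}{p-1}\cdot\prod_{i=1}^{d-1}(-p^i+1)=(-p^d+1)\prod_{i=1}^{d-1}(-p^i+1),
\]
which is $\prod_{i=1}^d(-p^i+1)$. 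The substantive work is entirely in Proposition \ref{induct}; given it, no step here is a genuine obstacle. The points that warrant a moment's care are checking that $W^\bot/W$ is again nondegenerate, nonsplit and of the correct dimension so the inductive hypothesis is legitimately available, and that $j_{W,V,*}$ acts as the identity once the relevant top Chow groups are identified with $\bQ$ via degree.
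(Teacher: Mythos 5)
Your proposal is correct and follows essentially the same route as the paper: induction on $d$ with base case from Example \ref{1D}, then Proposition \ref{induct}, the projection formula via $j_{W,V}^*\cL_{Y_V}\simeq\cL_{Y_{W^\bot/W}}$, the induction hypothesis on $W^\bot/W$, and the count of isotropic lines from \cite[Corollary 3.2.3]{LZSW}. The extra checks you flag (that $W^\bot/W$ is again nondegenerate nonsplit of dimension $2d$, and that pushforward respects the degree identifications) are left implicit in the paper but are correct and harmless to include.
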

\begin{proof}
    We induct on \(d\).
    When \(d = 1\), then it follows from Example \ref{1D}.

    When \(d >1\), we have
    \begin{align*}
        c_1(\cL_{Y_V}^\vee)^d &= \frac{-p+1}{p^{d+1}+1}c_1(\cL_{Y_V}^\vee)^{d-1}\sum_{W\in\bP(V) \hspace{1mm}\mathrm{isotropic}} [Y_{W^\bot/W}] \\
        &= \frac{-p+1}{p^{d+1}+1} \sum_{W\in\bP(V) \hspace{1mm}\mathrm{isotropic}} j_{W,V,*}(c_1(\cL_{Y_{W^\bot/W}}^\vee)^{d-1}) \\
        &= \frac{-p+1}{p^{d+1}+1}\frac{(p^{d+1}+1)(p^d-1)}{p-1}\prod_{i = 1}^{d-1} (-p^i+1) = \prod_{i = 1}^d (-p^i+1),
    \end{align*}
    where the first equality is by Proposition \ref{induct}, the second by the projection formula, and the third by \cite[Corollary 3.2.3]{LZSW} and the induction hypothesis.
\end{proof}

\begin{thm} \label{main}
The degree of \(Y_V\) in terms of its Pl\"{u}cker embedding through \(\cU_{d+1}\) is
\[
    2^d\prod_{i=1}^d \frac{-p^i+1}{-p+1}.
\]
\end{thm}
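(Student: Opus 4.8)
The plan is to reduce the statement to Proposition~\ref{analog}. Recall that the Pl\"{u}cker embedding of $Y_V$ through $\cU_{d+1}$ is the composition of the forgetful closed immersion $Y_V\to \OGr(d+1)$ of Lemma~\ref{cl} with the Pl\"{u}cker embedding $\OGr(d+1)\hookrightarrow \bP(\wedge^{d+1}(V\otimes\bF_{p^2}))$. The hyperplane class of this projective embedding pulls back to $c_1$ of the dual of the determinant line bundle $\wedge^{d+1}\cU_{d+1}$ restricted to $Y_V$, i.e.\ to $c_1\bigl((\det\cU_{d+1})^\vee\bigr)$. Since $Y_V$ has pure dimension $d$, the degree we want is the intersection number $c_1\bigl((\det\cU_{d+1})^\vee\bigr)^d[Y_V]$ evaluated in $\Ch^d(Y_{V,\ol{\bF_p}})_\bQ\simeq\bQ$.

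The key step is therefore to compare $\det\cU_{d+1}$ with $\cL_{Y_V}=\cU_{d+1}/\cU_d$. From the short exact sequence $0\to\cU_d\to\cU_{d+1}\to\cL_{Y_V}\to 0$ we get $\det\cU_{d+1}\simeq\det\cU_d\otimes\cL_{Y_V}$, so $c_1\bigl((\det\cU_{d+1})^\vee\bigr)=c_1(\cL_{Y_V}^\vee)+c_1\bigl((\det\cU_d)^\vee\bigr)$. Next I would argue that $\det\cU_d$ is \emph{trivial} on $Y_V$, or at least that $c_1\bigl((\det\cU_d)^\vee\bigr)$ acts as $p$ times $c_1(\cL_{Y_V}^\vee)$ in the appropriate sense --- in fact the cleaner route is to observe that, because $Y_V$ sits over $\OGr(d)$ through the rank-$d$ bundle $\cU_d$ and the relevant component structure, the line bundle $\det\cU_d$ on $Y_V$ is isomorphic to $\Frob^*\det\cU_d$-twisted data; more concretely, one shows $c_1\bigl((\det\cU_{d+1})^\vee\bigr)=2\,c_1(\cL_{Y_V}^\vee)$ on $Y_V$. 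Indeed in the base case $d=1$ this is exactly what Example~\ref{1D} records: there $\cL_{Y_V^\pm}=\cO_{\bP^1}(p-1)$ while the Pl\"{u}cker line bundle through $\cU_2$ on $Y_V^\pm\simeq\bP^1$ must have degree $2(p-1)$, giving degree $2(p-1)/(-p+1)\cdot(-p+1)$... so the factor $2^d$ in the theorem comes precisely from this doubling $\det\cU_{d+1}^\vee\leftrightarrow 2\cL_{Y_V}^\vee$ on each of the $d$ factors, matching $2^d$ against $\prod(-p^i+1)/\prod(-p+1)$.

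Granting that comparison, the computation finishes:
\[
    \deg Y_V = c_1\bigl((\det\cU_{d+1})^\vee\bigr)^d[Y_V] = \bigl(2\,c_1(\cL_{Y_V}^\vee)\bigr)^d[Y_V] = 2^d\, c_1(\cL_{Y_V}^\vee)^d = 2^d\prod_{i=1}^d(-p^i+1),
\]
where the last equality is Proposition~\ref{analog} and we have used the identification of $\Ch^d(Y_{V,\ol{\bF_p}})_\bQ$ with $\bQ$. Rewriting $\prod_{i=1}^d(-p^i+1)=\prod_{i=1}^d\bigl((-p+1)\cdot\frac{-p^i+1}{-p+1}\bigr)$ is not quite right as stated, so instead one simply notes $2^d\prod_{i=1}^d(-p^i+1)=2^d\prod_{i=1}^d\frac{-p^i+1}{-p+1}\cdot(-p+1)^d$; to land on the stated form one should double-check the normalization of the Pl\"{u}cker embedding and whether the intended answer already absorbs a $(-p+1)^d$ --- this bookkeeping about which power of $(-p+1)$ the chosen Pl\"{u}cker embedding carries is the main obstacle, and it is resolved by pinning down $\det\cU_{d+1}$ on $Y_V$ via Example~\ref{1D} and the inductive structure $j_{W,V}^*\det\cU_{d+1}$ for $W$ a line, exactly as $\cL_{Y_V}$ was handled in Lemma~\ref{1Dcyc} and Proposition~\ref{induct}.
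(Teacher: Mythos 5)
Your overall strategy is the paper's: reduce to Proposition \ref{analog} by expressing the Pl\"{u}cker hyperplane class $c_1(\cU_{d+1}^\vee)$ as a multiple of $c_1(\cL_{Y_V}^\vee)$. But the crucial comparison is wrong, and you flag it yourself as unresolved (``this bookkeeping \dots is the main obstacle''). The claimed identity $c_1((\det\cU_{d+1})^\vee)=2\,c_1(\cL_{Y_V}^\vee)$ cannot hold: already for $d=1$, Example \ref{1D} gives $\deg\cL_{Y_V}^\vee=-p+1<0$, so your identity would make the Pl\"{u}cker degree equal to $-2p+2$, a negative number, whereas the theorem (and a direct check) gives $2$. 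Likewise $\det\cU_d$ is not trivial on $Y_V$. The correct proportionality constant is $\tfrac{2}{-p+1}$, not $2$; the factor $(-p+1)^d$ you could not account for is exactly the discrepancy.

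The missing ingredient is the identity $\cU_d^\bot=\cU_{d+1}+\Frob^*\cU_{d+1}$ (established in the proof of Lemma \ref{sm}). Since the quadratic form identifies $\cU_d^\bot\simeq (V\otimes\cO_{Y_V}/\cU_d)^\vee$, one has $c_1(\cU_d^\bot)=c_1(\cU_d)$; on the other hand, filtering $\cU_{d+1}+\Frob^*\cU_{d+1}$ by $\cU_{d+1}$ with quotient $\Frob^*\cU_{d+1}/\cU_d$ and using $c_1(\Frob^*\cU_{d+1})=p\,c_1(\cU_{d+1})$ yields
\[
0=(p+1)\,c_1(\cU_{d+1})-2\,c_1(\cU_d),
\]
hence $c_1(\cL_{Y_V})=c_1(\cU_{d+1})-c_1(\cU_d)=\tfrac{-p+1}{2}\,c_1(\cU_{d+1})$. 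Therefore $c_1(\cU_{d+1}^\vee)^d=\bigl(\tfrac{2}{-p+1}\bigr)^d c_1(\cL_{Y_V}^\vee)^d$, and Proposition \ref{analog} gives exactly $2^d\prod_{i=1}^d\tfrac{-p^i+1}{-p+1}$ with no leftover power of $-p+1$. Without some such global argument pinning down $c_1(\cU_d)$ in terms of $c_1(\cU_{d+1})$ (your suggestion to redo the inductive special-cycle analysis for $\det\cU_{d+1}$ could in principle work but is not carried out), the proof is incomplete.
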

\begin{proof}
As in the proof of Lemma \ref{sm}, we have \(\cU_d^\bot = \cU_{d+1}+ \Frob^*\cU_{d+1}\).
This together with \(\cU_d^\bot \simeq (V\otimes \cO_{Y_V}/\cU_d)^\vee\) leads to the following:
\begin{align*}
    0 = &c_1(\cU_{d}^\bot)-c_1(\cU_{d}) =c_1(\cU_{d+1} + \Frob^*\cU_{d+1})-c_1(\cU_{d}) \\
    =&c_1(\Frob^*\cU_{d+1}/\cU_d)+c_1(\cU_{d+1})-c_1(\cU_d) =(p+1)c_1(\cU_{d+1})-2c_1(\cU_d).
\end{align*}
This means that
\[
    c_1(\cL_{Y_V}) = c_1(\cU_{d+1}) - c_1(\cU_d) = \frac{-p+1}{2}c_1(\cU_{d+1}).
\]
Through the Pl\"{u}cker embedding into \(\bP^N\), by the projection formula and Proposition \ref{analog},
\begin{align*}
    c_1(\cO_{\bP^N}(1))^d[Y_V] &= c_1(\cO_{\bP^N}(1)|_{Y_V})^d = c_1(\det \cU_{d+1}^\vee)^d = c_1(\cU_{d+1}^\vee)^d \\
    &= 2^d\prod_{i=1}^d \frac{-p^i+1}{-p+1}.
\end{align*}
\end{proof}
\bibliographystyle{amsplain}

\noindent
Yuta Nakayama\\
Graduate School of Mathematical Sciences, The University of Tokyo,
3-8-1 Komaba, Meguro-ku, Tokyo, 153-8914, Japan \\
nkym@ms.u-tokyo.ac.jp
\end{document}